\newtheorem{thm}{Theorem}[section]
\newtheorem{prop}[thm]{Proposition}
\newtheorem{lem}[thm]{Lemma}
\theoremstyle{definition}
\newtheorem*{remark*}{Remark}
\numberwithin{equation}{section}
\newcommand{\C}{\mathbb{C}}
\renewcommand{\P}{\mathbb{P}}
\newcommand{\Z}{\mathbb{Z}}
\newcommand{\Gal}{{\rm Gal}}
\title{Genus zero tensor products}
\author[M.~J.~Larsen]{Michael J.~Larsen}
\address{Department of Mathematics, Indiana University, Bloomington, IN 47405, USA}
\email{mjlarsen@iu.edu}
\author[Y.~Shi]{Yue Shi}
\address{Department of Mathematics, Indiana University, Bloomington, IN 47405, USA}
\email{shi10@iu.edu}
\begin{document}
\begin{abstract}
Let $L$ and $M$ be finite extensions of $K = \C(t)$.  If $L\otimes _K M$ is a field of genus $0$, then at least one of $L$ and $M$
is ramified over at most four valuations of $K$.
\end{abstract}
\thanks{ML was partially supported by NSF grant DMS-2401098 and by the Simons Foundation.}
\maketitle

\section{Introduction}

Finite extensions of $K=\C(t)$ which are of genus zero, i.e. isomorphic over $\C$ to $K$ itself, are in various senses special. In particular, there are strong
constraints on the possible Galois groups $\Gal(L^s/K)$, where $L^s$ denotes the splitting field of $L/K$ (see \cite{A,GT,LS}).

Pairs $(L,M)$ of finite $K$-extensions, where
$L\otimes_K M$ is of genus zero, satisfy even stronger group-theoretic constraints. Unlike in the case of a single extension, these constraints
are strong enough to limit ramification.
In \cite{ILZ}, it is proved that at least one of $L$ and $M$
is ramified over $\le 5$ valuations of $K$. It is further predicted that the bound $5$ can be improved.
In this paper, we prove that $4$ is the optimal bound.


Via the Riemann-Hurwitz theorem and the well known correspondence between
transitive permutation representations of fundamental groups of punctured Riemann surfaces and branched covers of those surfaces \cite[III Corollary 4.10]{M},
the question considered in this paper reduces to a set of problems about permutation groups. We use elementary inequalities to determine the possibilities for local monodromy which are numerically consistent with genus zero.
Usually, all that matters is the number of fixed points and the number of orbits of the various local monodromy elements.
Once $[L:K]$ and $[M:K]$ are sufficiently large (we assume $32$ in this paper), this can be done in a fairly straightforward way, but the smaller values are more troublesome and are treated by computer-assisted case analysis.
In particular, we deal with the case $[L:K]=[M:K]=6$ by enumerating by computer all pairs of $5$-tuples of conjugacy classes
in $\mathsf{S}_6$, for which $L$, $M$ and $L\otimes_K M$ all have genus $0$.
At the end of the classification process, the
A-D-E classification makes a surprising appearance.

Next we consider whether there are actual field extensions $L$ and $M$ of $K$ satisfying the specified local monodromy conditions; this amounts to searching for $5$-tuples of permutations which lie in specified conjugacy classes, 
multiply to $1$, and generate transitive groups.  Finally, assuming that $L$ and $M$ can be constructed individually, the question remains as to whether $L\otimes_K M$, which formally has genus $0$, is actually a field.
It turns out that in every case, the answer is no. For the cases of type $D_n$ this can be shown directly ($L$ and $M$ must each contain a quadratic subfield  ramified only over two specified valuations of $K$, so the tensor product of those subfields has zero-divisors.)
Lacking a conceptual proof that covers the $E_n$ cases, we 
make use of machine-assisted case analysis.  For $E_8$, in particular, these computations require some care, since the number of cases to be considered is enormous.
\begin{thm}
If $L$ and $M$ are finite extensions of $K$ such that $L\otimes_K M$ is of genus zero,
then at least one of $L$ and $M$ is ramified over no more than $4$ valuations of $K$.

\end{thm}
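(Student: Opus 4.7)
The plan is to argue by contradiction. Suppose both $L$ and $M$ are ramified over at least $5$ valuations of $K = \C(t)$; I aim to derive that $L \otimes_K M$ cannot simultaneously be a field of genus $0$.

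The first step is to translate the problem into permutation group theory via monodromy. Let $S = S_L \cup S_M$ be the union of the ramification loci, so that $L$, $M$, and $L \otimes_K M$ are all unramified over $\P^1_\C \setminus S$. Writing $n = [L:K]$, $m = [M:K]$, and $r = |S|$, the extensions correspond to transitive actions of $\pi_1(\P^1 \setminus S)$ on sets of sizes $n$ and $m$, and $L \otimes_K M$ is a field precisely when the diagonal action on the product set is transitive. Choosing standard generators $x_1, \ldots, x_r$ with $\prod x_i = 1$ and setting $\sigma_i \in \mathsf{S}_n$, $\tau_i \in \mathsf{S}_m$ to be the corresponding local monodromies, the genus-$0$ condition for the three covers becomes the triple of Riemann-Hurwitz identities
\begin{align*}
\sum_i \bigl(n - \mathrm{orb}(\sigma_i)\bigr) &= 2n - 2,\\
\sum_i \bigl(m - \mathrm{orb}(\tau_i)\bigr) &= 2m - 2,\\
\sum_i \bigl(nm - \mathrm{orb}(\sigma_i \times \tau_i)\bigr) &= 2nm - 2,
\end{align*}
where $\mathrm{orb}(\sigma)$ counts the orbits of $\sigma$ and $\mathrm{orb}(\sigma_i \times \tau_i)$ may be evaluated by the $\gcd$-sum over pairs of cycles of $\sigma_i$ and $\tau_i$. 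The hypothesis $|S_L|, |S_M| \geq 5$ forces at least five of the $\sigma_i$ and at least five of the $\tau_i$ to be nontrivial.

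The heart of the argument is a case split on $(n,m)$. In the large-degree regime $n, m \geq 32$, the aim is to derive a direct numerical contradiction by combining the three identities with elementary inequalities relating fixed-point counts, orbit counts, and $\gcd$-sums of cycle lengths; the slack in these inequalities grows with $n$ and $m$, which makes this regime comparatively clean. The small-degree range is finite but subtle and will be attacked by computer: for each $(n,m)$ below the threshold, one enumerates tuples of conjugacy classes in $\mathsf{S}_n$ and in $\mathsf{S}_m$ for which all three Riemann-Hurwitz identities hold. The principal obstacle here is $n = m = 6$, which I would handle by an exhaustive enumeration over $5$-tuples of $\mathsf{S}_6$-conjugacy classes; at the conclusion of this enumeration I expect an A-D-E pattern of surviving candidate families to emerge.

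Finally, each surviving numerical candidate must be excluded as an actual pair of field extensions. For each datum I would first check realizability: does there exist a tuple of permutations in the prescribed conjugacy classes whose product is $1$ and which generates a transitive subgroup? Candidates failing this test are eliminated. For the remainder, one must show that the diagonal action on the product set is intransitive, so that $L \otimes_K M$ is not a field despite the genus-$0$ identity holding formally. For type-$D_n$ configurations this is conceptual: both $L$ and $M$ then contain the unique quadratic extension of $K$ ramified over a particular common pair of valuations, and the tensor product of this subfield with itself has zero divisors. For the type-$E_n$ configurations I do not expect a conceptual obstruction, and will resort to machine-assisted analysis of the transitivity of the diagonal action; the $E_8$ case, with its enormous number of configurations, will be the principal computational hurdle and must be organized carefully.
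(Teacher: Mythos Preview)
Your proposal follows the paper's argument closely: monodromy reduction, the three Riemann--Hurwitz identities, numerical classification with machine assistance for small degrees and direct inequalities for $m\ge 32$, emergence of the A--D--E list, and case-by-case elimination. One small correction: the A--D--E pattern does not arise from the $\ell=m=6$ enumeration (that case dies immediately, as the unique surviving pair has $M$ ramified at only four points); it comes instead from the generic situation $R_1=R_2=1$, $S_1+S_2\le 2$, where homogeneity of both $L$ and $M$ at $x_3,x_4,x_5$ forces $\tfrac1{r_3}+\tfrac1{r_4}+\tfrac1{r_5}>1$.

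The one substantive divergence is your endgame for $E_6,E_7,E_8$. You plan to verify intransitivity of the diagonal action by enumerating pairs of realizations of $(L,M)$. The paper instead works with $L$ alone: it shows by computer that every transitive realization is imprimitive, preserving a partition into $2$-element blocks on which the two transposition generators act trivially. The induced action on blocks then has monodromy $(\bar g_1,\bar g_2,\bar g_3)$ of orders $(2,3,r_5)$ generating the spherical von~Dyck group $D(2,3,r_5)$ in its regular representation, so the corresponding index-$2$ subfield $L_0\subset L$ is the unique $K$-extension with that monodromy over the three homogeneous branch points. The same argument yields $M_0\subset M$ with $M_0\cong L_0$ as $K$-algebras, whence $L_0\otimes_K M_0$ already has zero-divisors. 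Thus the $E_n$ cases fall to the \emph{same} common-subfield obstruction you identified for $D_n$, and the machine search never has to range over pairs. Your brute-force diagonal check should also succeed, but it forfeits this uniform explanation and squares the search space.
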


In the last section of this paper, we use pairs of Latt\`es maps to show that this theorem is sharp.
\section{Numerical conditions}

Let $\ell=[L:K]$ and $m=[M:K]$.  We assume without loss of generality that $m\ge \ell$.
Let $x_1,\ldots,x_n$ denote the valuations of $K$ over which $L$ or $M$ is ramified, arranged so that the points ramified in $L$ come first. We already know by \cite{ILZ} that $L/K$ is unramified over $x_i$ for $i\ge 6$.
Let $e_{i,1},e_{i,2},\ldots$ denote the ramification indices of the valuations $y_{i,1},y_{i,2},\ldots$ of $L$ lying over $x_i$
and  $f_{i,1},f_{i,2},\ldots$ denote the ramification indices of the valuations $z_{i,1},z_{i,2},\ldots$ of $M$ lying over $x_i$.  
We write $\pi_L$ for the sequence of partitions of $\ell$ whose $i$th term has parts $e_{i,1},e_{i,2},\ldots$,
and likewise for $\pi_M$ and $f_{i,1},f_{i,2},\ldots$.

As 
$$\C((t^{1/e}))\otimes_{\C((t))} \C((t^{1/f}))\cong \C(t^{1/[e,f]})^{(e,f)},$$
the degree of the ramification divisor of $LM/L$ restricted to $y_{i,j}$ is
$$\sum_k (e_{i,j},f_{i,k})\Bigl(\frac{[e_{i,j},f_{i,k}]}{e_{i,j}}-1\Bigr).$$
Therefore, the ramification divisor of $LM/L$ has degree $\sum_i E_i$, where
\begin{equation}
\label{LM/L}
E_i = \sum_{j,k}(e_{i,j},f_{i,k})\Bigl(\frac{[e_{i,j},f_{i,k}]}{e_{i,j}}-1\Bigr).
\end{equation}
Likewise, the ramification divisor of $LM/M$ has degree $\sum_i F_i$, where
$$F_i = \sum_{j,k}(e_{i,j},f_{i,k})\Bigl(\frac{[e_{i,j},f_{i,k}]}{f_{i,k}}-1\Bigr).$$

Let $R_i = \sum_j (e_{i,j}-1)$ (resp. $S_i = \sum_k (f_{i,k}-1)$) denote the degree of the ramification divisor
of $L/K$ (resp. $M/K$) lying over $x_i$. By L\"uroth's theorem, if $L\otimes_K M$ is a field of genus zero, then $L$ and $M$ are also of genus zero,
so by the Riemann-Hurwitz formula,
\begin{equation}
\label{RHL}
\sum_i R_i = 2\ell-2
\end{equation}
and
\begin{equation}
\label{RHM}
\sum_i S_i = 2m-2.
\end{equation}

Let $a_i$ (resp. $b_i$) denote the number of terms $j$ (resp. $k$) such that $e_{i,j}=1$ (resp. $f_{i,k}=1$).
By \eqref{LM/L}, the contribution to $E_i$ from terms $(j,k)$ where $e_{i,j} = 1$ is 
$a_i\sum_k (f_{i,k}-1) = a_i S_i$. Likewise, the contribution to $F_i$ from terms $(j,k)$ where $f_{i,k}=1$
is $b_i R_i$.  Again by Riemann-Hurwitz, we have
\begin{equation}
\label{e=1}
\sum_i a_i S_i \le \sum_i E_i = 2[LM:L]-2 = 2m-2,
\end{equation}
and
\begin{equation}
\label{f=1}
\sum_i b_i R_i \le \sum_i F_i = 2[LM:M]-2 = 2\ell-2.
\end{equation}
For each $i$, we have
\begin{equation}
\label{R-ineq}
\begin{split}
a_i+R_i &= a_i+\sum_{\{j: e_{i,j}\ge 2\}} (e_{i,j}-1) \le a_i+\sum_{\{j: e_{i,j}\ge 2\}} e_{i,j} =\sum_j e_{i,j}=\ell\\
& \le a_i+2\sum_{\{j: e_{i,j}\ge 2\}} (e_{i,j}-1) \le a_i+2R_i.,
\end{split}
\end{equation}
where the first inequality is strict unless $R_i=0$.
Likewise
\begin{equation}
\label{S-ineq}
b_i+S_i\le m\le b_i+2S_i,
\end{equation}
where the first inequality is strict unless $S_i=0$.
We say $L/K$ (resp. $M/K$) is \emph{homogeneous of index $r$ over $x_i$} if $a_{i,j}=r$ for all $j$ (resp. $b_{i,k}=r$ for all $k$).

\begin{lem}
\label{inhomo}
Assume  $L$ and $M$ are not both homogeneous of the same index over $x_i$.
If $\ell,m \ge 4$, either $E_i\ge 2$ or $F_i\ge 2$, and if $\ell,m \ge 7$,
either $E_i\ge 3$ or $F_i\ge 3$.
\end{lem}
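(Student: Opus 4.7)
My plan is to first use the identity $(e,f)\cdot[e,f]=ef$ to rewrite $E_i = \sum_{j,k}(f_{i,k}-(e_{i,j},f_{i,k}))$ and $F_i = \sum_{j,k}(e_{i,j}-(e_{i,j},f_{i,k}))$, each summand a nonnegative integer that vanishes exactly when $f_{i,k}\mid e_{i,j}$ (for $E_i$) or $e_{i,j}\mid f_{i,k}$ (for $F_i$). Writing $p=\#\{j\}$ and $q=\#\{k\}$, one obtains $F_i - E_i = q\ell - pm$. I will also track the defect $d(e,f):=e+f-2(e,f)\ge 0$, so that $E_i+F_i = \sum_{j,k}d(e_{i,j},f_{i,k})$; note $d(e,f)=0$ iff $e=f$ and $d(e,f)=1$ iff $\{e,f\}=\{1,2\}$.

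I would then case-split on $(p,q)$. When $p=q=1$, $L$ and $M$ are totally ramified at $x_i$, the hypothesis forces $\ell\ne m$, and the formulas $E_i=m-(\ell,m)$, $F_i=\ell-(\ell,m)$ give the required bounds since any proper divisor of $\ell$ is at most $\ell/2$. When $\min(p,q)=1$ but not both, say $p=1$, each $f_{i,k}$ contributes $f_{i,k}-\ell$ to $E_i$ if $\ell\mid f_{i,k}$ and at least $\ell/2$ to $F_i$ if $\ell\nmid f_{i,k}$; since the hypothesis rules out $f_{i,k}=\ell$ for all $k$, either some $f_{i,k}$ is a larger multiple of $\ell$, giving $E_i\ge\ell$, or some $f_{i,k}$ is not a multiple of $\ell$, giving $F_i\ge\ell/2$.

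The core of the proof is the case $p,q\ge 2$. For the weaker claim $\max(E_i,F_i)\ge 2$ under $\ell,m\ge 4$, I would prove the stronger bound $E_i+F_i\ge 3$ by contradiction. If $E_i+F_i\le 2$, then at most two pairs $(j,k)$ carry positive defect, and the remaining pairs satisfy $e_{i,j}=f_{i,k}$. The few available small-value patterns---a single pair with $\{e_{i,j},f_{i,k}\}\in\{\{1,3\},\{2,4\}\}$ for defect $2$, or two pairs of form $\{1,2\}$ each of defect $1$---then propagate through the agreement $e_{i,j}=f_{i,k}$ on the other pairs, forcing $p=1$, $q=1$, or $\ell,m<4$, each excluded.

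For the stronger conclusion $\max(E_i,F_i)\ge 3$ under $\ell,m\ge 7$, I would split on $E_i\in\{0,1,2\}$. If $E_i=0$, every $f_{i,k}$ divides $d:=\gcd_j e_{i,j}$, and $F_i=q(\ell-pd)+p(qd-m)$ is a sum of nonnegative multiples of $q$ and $p$; with $p,q\ge 2$, $F_i\le 2$ forces $d\le 2$ and hence $\ell\le 4$, contradicting $\ell\ge 7$. If $E_i=1$, the unique bad pair has $f_{i,k_0}=2$ with $e_{i,j_0}$ odd, and the structural constraints propagate to $d=1$ and $m=q+1$; then $F_i\le 2$ forces $q\le 3$, so $m\le 4$, incompatible with $m\ge 7$. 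The \textbf{main obstacle} is the case $E_i=2$, where the defect structure branches into several sub-cases (a single bad pair with $f_{i,k_0}\in\{3,4\}$, or two defect-$1$ pairs sharing a row, sharing a column, or in distinct rows and columns---the last immediately ruled out by a divisibility clash); each sub-case is dispatched by analogous gcd-based inequalities combined with $\ell,m\ge 7$.
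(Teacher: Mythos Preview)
Your rewriting of $E_i$ and $F_i$ and the defect $d(e,f)=e+f-2(e,f)$ is correct and useful, and the arguments for $p=1$ or $q=1$ are fine. However, in the core case $p,q\ge 2$, the ``stronger bound'' $E_i+F_i\ge 3$ that you propose to prove under $\ell,m\ge 4$ is \emph{false}. Take the local data $(e_{i,1},e_{i,2},e_{i,3})=(1,2,2)$ (so $\ell=5$, $p=3$) and $(f_{i,1},f_{i,2})=(2,2)$ (so $m=4$, $q=2$). The only pairs with $e_{i,j}\ne f_{i,k}$ are $(1,1)$ and $(1,2)$, each of defect $1$, and one computes $E_i=2$, $F_i=0$. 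Thus $\max(E_i,F_i)=2$ as the lemma asserts, but $E_i+F_i=2$. This is precisely your ``two defect-$1$ pairs sharing a row'' sub-case: the propagation forces $q=2$ and $m\in\{2,4\}$, not $m<4$, so the hypothesis $\ell,m\ge 4$ does not exclude it. The repair is not hard---in each configuration with $E_i+F_i=2$ one checks directly that one of $E_i,F_i$ equals $2$---but as stated your plan has a genuine gap.

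There is a second slip in your $E_i=0$ analysis for $\ell,m\ge 7$: the inference ``$d\le 2$ and hence $\ell\le 4$'' is invalid (e.g.\ $e=(2,2,2,2)$ has $d=2$, $\ell=8$). The conclusion in that sub-case is correct, but you need to combine $F_i=q\ell-pm$ more carefully with the divisor constraint $f_k\mid d$ and the non-homogeneity hypothesis; the bound on $d$ alone does not finish it.

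The paper's route avoids these edge cases by never passing through the sum $E_i+F_i$. It works directly from the assumption $\max(E_i,F_i)\le 1$ (resp.\ $\le 2$), classifies the pairs $(e,f)$ whose individual $(j,k)$-contribution to $E_i$ and to $F_i$ are both small, deduces that very few $(j,k)$ can have $e_{i,j}\ne f_{i,k}$ and with heavily restricted values, and then shows that if (say) $L$ is inhomogeneous the resulting constraints force $m$ below the threshold. This is close in spirit to your defect analysis, but by bounding $E_i$ and $F_i$ separately rather than their sum it sidesteps the $E_i+F_i=2$, $\max=2$ configurations that break your argument.
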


\begin{proof}
We have $[e,f]-e\ge 3$ unless $f$ divides $e$ or $(e,f)\in\{(1,2),(1,3),(2,4)\}$, and $[e,f]-f\ge 3$ unless $e$ divides $f$ or
$(e,f)\in\{(2,1),(3,1),(4,2)\}$. Therefore, $\max(E_i,F_i)\le 1$ implies that there is at most one ordered pair $(j,k)$ such that 
$e_{i,j}\neq f_{i,k}$, and for this pair $\{e_{i,j},f_{i,k}\} = \{1,2\}$.
Likewise, 
$\max(E_i,F_i)\le 2$ implies that there are at most two ordered pairs $(j,k)$ such that $e_{i,j}\neq f_{i,k}$, and for these pairs $\{e_{i,j},f_{i,k}\} = \{1,2\}$
or there is at most one ordered pair $e_{i,j}\neq f_{i,k}$, and for this pair $\{e_{i,j},f_{i,k}\}$ is $\{1,3\}$ or $\{2,4\}$.

Assume $E_i,F_i\le 1$ and (without loss of generality) $L$ is not homogeneous.  Then $e_{i,j}\in\{1,2\}$ for all $j$, and both values must occur at least once.  If there are two values $j$ with $e_{i,j}=1$, then $f_{i,k}=1$ for all $k$, so there can be at most one $k$-value, and $m=1$.  If there are two values $j$ with $e_{i,j}=2$, then $f_{i,k}=2$ for all $k$, so there can be at most one $k$-value, and $m=2$.  

Assume $E_i,F_i\le 2$ and $L$ is not homogeneous.  Then $\{e_{i,1},e_{i,2},\ldots\}$ must be $\{1,2\}$, $\{1,3\}$, or $\{2,4\}$. In the case $\{1,2\}$, since $\ell\ge 7$, at least one element of $\{1,2\}$ must occur as $e_{i,j}$ for at least $3$ values of $j$, and $f_{i,k}$ must then take this value for all $k$.
There can be at most $2$ $k$-values, therefore, and $m\ge 4$.  
In the case $\{1,3\}$, since $\ell\ge 5$, at least one element of $\{1,3\}$ must occur for at least $2$ values of $j$,
and $f_{i,k}$ must take this value for all $k$, implying $m\ge 3$.
In the case $\{2,4\}$, since $\ell\ge 7$, at least one element of $\{2,4\}$ must occur for at least $2$ values of $j$,
and $f_{i,k}$ must take this value for all $k$, implying $m\ge 4$.

\end{proof}

\begin{lem}
\label{basic}
Suppose $m\ge \ell > 0$, $a_i>0$, $b_i$, $R_i$, and $S_i$ are non-negative integers
satisfying the conditions \eqref{RHL}, \eqref{RHM}, \eqref{e=1}, \eqref{f=1}, \eqref{R-ineq}, and
\eqref{S-ineq}. Then
after suitable renumbering of the indices, we have $R_1=1$, $S_2<m/3$, and $m\le 2\ell-6$.
%
%
\end{lem}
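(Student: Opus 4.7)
The plan is to exploit the hypothesis $a_i\geq 1$ to force a rigid pointwise identity via \eqref{e=1}, then analyze the highly restricted configurations that remain. Combining $\sum_i S_i = 2m-2$ from \eqref{RHM} with $\sum_i a_iS_i\leq 2m-2$ from \eqref{e=1} yields $\sum_i(a_i-1)S_i\leq 0$; since every summand is non-negative, each vanishes, so for every $i$ either $a_i=1$ or $S_i=0$. Partition the indices into $I_1=\{i:S_i>0\}$ and $I_2=\{i:S_i=0\}$. On $I_1$, $a_i=1$; together with the strict form of \eqref{R-ineq} (automatic since $a_i<\ell$) this gives $(\ell-1)/2\leq R_i\leq\ell-2$. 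On $I_2$, \eqref{S-ineq} collapses to $b_i=m$.

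Substituting $b_i=m$ on $I_2$ into \eqref{f=1} and dropping the non-negative $I_1$ term, $m\sum_{I_2}R_i\leq 2\ell-2$; with $m\geq\ell$ one gets $\sum_{I_2}R_i\leq 2-2/\ell<2$, so $\sum_{I_2}R_i\in\{0,1\}$. When the sum equals $1$, the unique $I_2$-index with $R_i>0$ satisfies $R_i=1$ and may be renamed $x_1$. Summing $R_i\geq(\ell-1)/2$ on $I_1$ against $\sum R_i=2\ell-2$ gives $|I_1|\leq 4$, and using $R_i\leq\ell-2$ on $I_1$ together with $\sum_{I_2}R_i\leq 1$ forces $|I_1|\geq 3$. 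The task is then to rule out $\sum_{I_2}R_i=0$: this would force $|I_1|\in\{3,4\}$ with $R_i$ pinned tightly, and feeding $b_i\geq m-2S_i$ back into \eqref{f=1} together with the strict form of \eqref{S-ineq} ($b_i+S_i\leq m-1$ on $I_1$) and parity/integrality constraints (e.g., $(\ell-1)/2$ must be an integer in the $|I_1|=4$ case) produces an arithmetic contradiction.

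For $S_2<m/3$, rename $x_2\in I_1$ to be the index minimizing $S_i$; combining $\sum_{I_1}S_i=2m-2$, $|I_1|\in\{3,4\}$, and the upper bound $\sum_{I_1}b_i\leq 2(2\ell-2-m)/(\ell-1)$ extracted from \eqref{f=1} (using $R_i\geq(\ell-1)/2$ and the $I_2$ contribution of $m$) with the strict form of \eqref{S-ineq} yields a weighted averaging estimate that pushes the minimum strictly below $m/3$. For $m\leq 2\ell-6$, combine this upper bound on $\sum_{I_1}b_i$ with the lower bound $\sum_{I_1}b_i\geq\sum_{I_1}(m-2S_i)=|I_1|m-2(2m-2)$ (valid when all $S_i\leq m/2$) to obtain a linear inequality in $m,\ell$ that rearranges to $m\leq 2\ell-6$. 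The main obstacle is the boundary case $|I_1|=4$ with $R_i=(\ell-1)/2$ uniform on $I_1$, where every ambient inequality becomes an equality; ruling it out hinges on a delicate exploitation of the strict forms of \eqref{R-ineq} and \eqref{S-ineq} combined with integrality and parity.
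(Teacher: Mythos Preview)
Your argument rests on the hypothesis $a_i>0$ for every index, and you exploit it immediately: from $\sum_i a_iS_i\le 2m-2=\sum_i S_i$ you deduce $\sum_i(a_i-1)S_i\le 0$ and hence, term by term, $a_i=1$ or $S_i=0$. That deduction is correct, but the hypothesis is almost certainly a misprint for $R_i>0$. Two pieces of evidence: first, the paper's own proof concludes $\min_iR_i=1$ from $(\ell+4)\min_iR_i\le 2\ell-2$, which only yields $\min_iR_i\le 1$; the remaining inequality $\min_iR_i\ge 1$ must come from the hypothesis. Second, the lemma is used downstream (Theorem~\ref{local}) on configurations such as $\pi_L=(1^{4n-2}2,1^{4n-2}2,2^{2n},2^{2n},n^4)$, where $a_3=a_4=a_5=0$. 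Under the intended hypothesis $R_i>0$, your opening move collapses: $a_i-1$ can be negative, so $\sum_i(a_i-1)S_i\le 0$ no longer forces each summand to vanish, and the dichotomy $a_i=1$ or $S_i=0$ is simply false (compare case~(1) of Lemma~\ref{m>6}, where $a_1=5$, $S_1=1$).

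Even if one takes the printed hypothesis at face value, the proof is incomplete. The promised ``arithmetic contradiction'' ruling out $\sum_{I_2}R_i=0$ is never produced; you invoke unspecified parity and integrality constraints and a ``delicate exploitation'' of strict inequalities, but in fact the case $|I_1|=4$, $R_i=(\ell-1)/2$ on $I_1$, $b_i=m-2S_i$ is internally consistent for small odd $\ell$ (e.g.\ $\ell=m=3$ with $R_i=S_i=a_i=b_i=1$ on $I_1$ and a trivial fifth index), and in that example $m\le 2\ell-6$ fails outright. The sketches for $S_2<m/3$ and $m\le 2\ell-6$ are likewise gestures rather than arguments. The paper's route is quite different and short: it sums the bounds $a_i\ge\ell-2R_i$ and $b_i\ge m-2S_i$ from \eqref{R-ineq}--\eqref{S-ineq} over the five indices to get $\sum b_i\ge m+4$, feeds this into \eqref{f=1} to force $\min_iR_i=1$, and then iterates similarly for the other two conclusions.
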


\begin{proof}
By \eqref{R-ineq}, \eqref{RHL},  \eqref{S-ineq}, and \eqref{RHM}, we have
\begin{equation}
\label{plus 4}
\begin{split}
\sum_{i=1}^5 a_i &\ge \sum_{i=1}^5 (\ell-2R_i) = 5\ell - 2\sum_{i=1}^5 R_i = \ell+4;\\
\sum_{i=1}^5 b_i &\ge \sum_{i=1}^5 (m-2S_i) = 5m - 2\sum_{i=1}^5 S_i = m+4\ge \ell+4.
\end{split}
\end{equation}
By \eqref{f=1}, this implies
$$2\ell-2 \ge \sum_{i=1}^5 b_i R_i \ge(\ell+4) \min_i R_i,$$
and therefore $\min_i R_i=1$. Reordering the indices, we are justified in assuming $R_1=1$, so 
$a_1=\ell-2$, and
\begin{equation}
\label{six}
a_2+a_3+a_4+a_5\ge 6.
\end{equation}
By \eqref{e=1},%
$$\sum_{i=2}^5 a_i S_i \le 2m-2,$$
so $S_i < m/3$ for some $i\ge 2$; without loss of generality, we can take $i=2$.
By \eqref{plus 4} and \eqref{f=1},
$$m+4\le \sum_{i=1}^5 b_i \le \sum_{i=1}^5 b_i R_i \le 2\ell-2,$$
so $m\le 2\ell-6$.  
\end{proof}

\begin{lem}
\label{m>6}
Let $m\ge 7$.  Under the hypotheses of Lemma~\ref{basic}, after renumbering the indices, either
$R_1=R_2=1$, $1\le S_1+S_2\le 2$, and $m\le \ell+1$, or 
$(\ell,m,R_1,R_2,S_1,S_2,a_1,a_2,b_1,b_2)$ is one of the following:
\begin{enumerate}
\item $(7,7,1,2,1,2,5,3,5,3)$
\item $(7,7,1,3,2,2,5,1,3,3)$
\item $(8,8,1,2,1,2,6,4,6,4)$
\item $(8,10,1,1,2,1,6,6,8,6)$
\item $(8,10,1,1,2,1,6,6,6,8)$
\item $(8,10,1,1,0,3,6,6,10,4)$
\item $(9,9,1,3,1,3,7,3,7,3)$
\item $(9,9,1,2,0,3,7,5,9,3)$
\item $(10,10,1,2,0,3,8,6,10,4)$
\end{enumerate}
\end{lem}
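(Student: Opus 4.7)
The plan is to continue from Lemma~\ref{basic}, which after renumbering gives $R_1 = 1$ and hence $a_1 = \ell - 2$ by \eqref{R-ineq}. I would organize the analysis around the question of whether another index has $R_i = 1$, since each such index forces $a_i = \ell - 2$ via \eqref{R-ineq} and feeds directly into the inequality \eqref{e=1}.

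First I would combine \eqref{f=1} with the lower bound $\sum_{i=1}^5 b_i \ge m + 4$ from \eqref{plus 4}. Writing $\sum_{i=1}^5 b_i R_i \ge b_1 + \rho\,\sum_{i=2}^5 b_i$ for $\rho := \min_{i\ge 2} R_i$, and using $b_1 \le m$, yields a linear inequality in $\rho, \ell, m, b_1$. When $\rho \ge 2$, one gets $b_1 \ge 2m - 2\ell + 10$, which forces $m \le 2\ell - 10$ and hence $\ell \ge 10$; further saturation of \eqref{S-ineq} (in particular $S_1 \le 2\ell - m - 10$, forcing $S_1 = 0$ at the extreme) should then essentially pin $(\ell, m)$ down near $(10,10)$ and match entries (7)--(9) of the list.

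If instead $\rho = 1$, I would renumber so that $R_1 = R_2 = 1$, forcing $a_1 = a_2 = \ell - 2$. Then \eqref{e=1} specializes to $(\ell - 2)(S_1 + S_2) + \sum_{i=3}^5 a_i S_i \le 2m - 2$, so $S_1 + S_2 \le (2m - 2)/(\ell - 2)$. A parallel analysis applied to \eqref{f=1} via \eqref{plus 4}, using $b_1, b_2 \ge m - 2S_i$, should give $m \le \ell + 1$ in the generic situation, and together with the integrality of the $S_i$ this yields $S_1 + S_2 \in \{0, 1, 2\}$. If $S_1 + S_2 \in \{1, 2\}$ and $m \le \ell + 1$, we are in the first alternative of the conclusion. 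Otherwise (either $S_1 + S_2 = 0$ under every valid renumbering, or $m \ge \ell + 2$), I would enumerate the remaining possibilities using Lemma~\ref{basic}'s bounds $m \le 2\ell - 6$ and $S_2 < m/3$, together with the hypothesis $m \ge 7$, to match entries (1)--(6).

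The main obstacle will be the exhaustive case analysis: verifying that every configuration satisfying the constraints either admits a renumbering landing in the first alternative, or else matches one of the nine listed tuples exactly. The delicate points are the edge cases with some $S_i = 0$ (entries (6), (8), (9)) and with $m - \ell \ge 2$ (entries (4)--(6)), where the inequalities \eqref{S-ineq} and \eqref{f=1} saturate; keeping track of these saturations across all five indices, while ruling out admissible integer tuples not in the list, is what makes the bookkeeping tedious.
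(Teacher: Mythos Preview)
Your approach differs substantially from the paper's. The paper does not attempt a unified theoretical case analysis; instead it first derives $S_1\le 3$ from $(\ell-2)S_1\le 2m-2\le 4\ell-14$, then runs a computer enumeration over all tuples $(\ell,m,R_1,R_2,S_1,S_2,a_1,a_2,b_1,b_2)$ with $7\le m\le 31$ satisfying the obvious constraints, extracting the nine exceptions directly. Only for $m\ge 32$ does it argue by inequalities, and there it keeps the renumbering already fixed by Lemma~\ref{basic} (with $S_2<m/3$) rather than introducing your $\rho=\min_{i\ge 2}R_i$; a short chain then forces $R_2=1$, $S_1+S_2\in\{1,2\}$, and $m\le\ell+1$. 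So the paper's argument is ``small cases by machine, large cases by hand,'' whereas yours is a purely theoretical dichotomy on $\rho$.

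Your $\rho\ge 2$ branch contains a concrete misstep. The derivation $b_1\ge 2m-2\ell+10$ and hence $\ell\ge 10$ is correct, but it does not ``pin $(\ell,m)$ down near $(10,10)$'': you have only a lower bound on $\ell$, and nothing you wrote bounds it above. More seriously, your claimed correspondence of $\rho\ge 2$ with entries (7)--(9) is wrong: entries (7) and (8) have $\ell=9<10$, so they \emph{cannot} arise with $\rho\ge 2$; they (and likewise entries (1)--(3)) must have a second index $j\ge 3$ with $R_j=1$ and therefore fall into your $\rho=1$ branch under a different renumbering than the one displayed in the list. Conversely, entry (9) need not have $\rho\ge2$ either. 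So the exceptional tuples do not split along your dichotomy in the way you predict, and the ``Otherwise'' subcase of your $\rho=1$ branch has to absorb essentially all nine exceptions (possibly after further renumbering to match the listed form). That is a much larger finite check than you indicate, which is exactly why the paper resorts to a computer search for $m\le 31$.
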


\begin{proof}

Since $(\ell-2)S_1 = a_1 S_1 \le 2m-2 $, which is $\le 4\ell-14$
by Lemma~\ref{basic}, we have $S_1 \le 3$, so by \eqref{S-ineq}, $b_1 \ge m-6$.

By computer, we  examine all tuples $(\ell,m,R_1,R_2,S_1,S_2,a_1,a_2,b_1,b_2)$ satisfying
$R_1=1$, $S_2<m/3$, $(m+6)/2\le \ell\le m$, $7\le m \le 31$,
\eqref{R-ineq} and \eqref{S-ineq} for $i=1,2$, $a_1S_1+a_2S_2 \le 2m-2$,
and $b_1R_1+b_2R_2\le 2\ell-2$.
We confirm that in every case other than those listed above, $R_2=1$, $m\le \ell+1$, and $1\le S_1+S_2\le 2$.

We may therefore assume $m\ge 32$, which means $\ell\ge \frac{m+6}2 \ge 19$.
As $S_2 < m/3$, $b_2 > m/3$ by \eqref{S-ineq}, so $R_2 \le 5$ by \eqref{f=1}, and $a_2 \ge \ell-10$ by \eqref{R-ineq}.
As
$$a_1 S_1 + a_2 S_2 \le 2m-2  \le 4\ell-14$$
and $\ell\ge 19$, we have $S_2\le 6$, and $b_2 \ge m-12$.  Thus, 
$$2\ell-2\ge b_1 R_1 + b_2 R_2 \ge b_1+b_2 \ge (m-6)+(m-12) = 2m-18,$$
so $m \le \ell+8$, and $\ell \ge 24$.  As $b_1R_1+b_2R_2\le 2m-2$, we have $R_2=1$. 
Since
$$2\ell+14 \ge 2m-2 \ge a_1 S_1 + a_2 S_2 = (\ell-2)(S_1+S_2),$$
we have $S_1+S_2 \le 2$.
The case $S_1=S_2=0$ is ruled out
since $b_1=b_2=m$ would imply 
$2\ell-2 \ge b_1R_1 + b_2R_2 = 2m$, which is impossible.
As
$$2\ell-2 \ge b_1R_1+b_2R_2\ge b_1+b_2 \ge (m-2S_1)+(m-2S_2) \ge 2m-4,$$
we have $m\le \ell+1.$
\end{proof}

\begin{lem}
\label{m=6}
Suppose $m=6$, and the hypotheses of Lemma~\ref{basic} are satisfied. Then after reindexing, we must have
$\pi_L=(1^4 2,1^4 2, 1^22^2,2^3,2^3)$ and $\pi_M = (1^6,1^4 2,2^3,2^3,2^3)$.
\end{lem}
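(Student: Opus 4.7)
The plan is to combine Lemma~\ref{basic} with the exact genus-zero condition and exhaust cases.

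First I apply Lemma~\ref{basic} with $m = 6$: the bound $m \le 2\ell - 6$ together with $\ell \le m$ forces $\ell = m = 6$, and after reindexing one has $R_1 = 1$, so $\pi_{L,1} = 1^4 2$ with $a_1 = 4$. The same lemma also gives $S_2 \le 1$, so $\pi_{M,2}\in \{1^6, 1^4 2\}$ with $b_2 \in \{6, 4\}$.

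Next I enumerate the candidate profiles. Each of $\pi_{L,i}$ and $\pi_{M,i}$ must be one of the eleven partitions of $6$, each determining $(R,a)$ or $(S,b)$ uniquely. Fixing $R_1 = 1$ and using $\sum R_i = 10$, the multiset $\{R_2,\ldots,R_5\}$ (entries in $\{0,1,\ldots,5\}$) is restricted to a short list; for each, I choose one of the partitions realizing each $R_i$ value and test the resulting $\pi_L$ against the constraints on $\pi_M$: $\sum S_i = 10$, $b_i + 2S_i \ge 6$, and the cross-bounds $\sum a_i S_i \le 10$ and $\sum b_i R_i \le 10$ from \eqref{e=1}, \eqref{f=1}. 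Dually, the multiset $\{S_1,\ldots,S_5\}$ has $S_2 \le 1$ and $\sum S_i = 10$, and the same cross-bounds squeeze $\pi_M$ once $\pi_L$ is fixed.

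The main obstacle is that the inequalities \eqref{e=1}, \eqref{f=1} alone are not sharp enough to isolate a unique pair; the exact equalities
\[
\sum_i E_i = 2m - 2 = 10, \qquad \sum_i F_i = 2\ell - 2 = 10,
\]
coming from $g(LM) = 0$ via Riemann--Hurwitz, are needed to eliminate the remaining spurious candidates. To check these, I compute $E_i$ and $F_i$ from \eqref{LM/L} and its analog for each surviving pair; Lemma~\ref{inhomo} simplifies matters by forcing $\max(E_i, F_i) \ge 2$ whenever $\pi_{L,i}$ and $\pi_{M,i}$ are not both homogeneous of the same index, and in particular isolates the indices at which all contributions to the ramification divisor of $LM/L$ and $LM/M$ are concentrated. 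The remaining finite check, feasible by hand with some bookkeeping or by the short computer enumeration alluded to in the introduction for the $[L:K]=[M:K]=6$ case, yields the stated pair as the unique solution.
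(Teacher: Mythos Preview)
Your proposal is correct and follows essentially the same approach as the paper: both reduce to $\ell=m=6$ via $m\le 2\ell-6$ and then carry out a finite enumeration of pairs of $5$-tuples of partitions of $6$ against the Riemann--Hurwitz constraints on $R_i$, $S_i$ and the bounds $\sum E_i\le 10$, $\sum F_i\le 10$. The paper omits your preliminary reductions via Lemma~\ref{basic} and simply enumerates directly by machine, but the substance is the same finite check (one small slip: your entries for $R_i$ should lie in $\{1,\ldots,5\}$, not $\{0,\ldots,5\}$, since $L$ is assumed ramified at all five points).
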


\begin{proof}
If $m=6$, the inequality $\ell\le m\le 2\ell-6$ implies $\ell=6$.
We choose an order on the partitions of $6$ and enumerate by machine all ordered pairs of ordered $5$-tuples of partitions of $6$ such that the partitions in the first $5$-tuple have at most $5$ parts and are weakly increasing
with respect to our order, such that 
$R_i\ge 1$ for $1\le i\le 5$ and satisfying the following conditions:
$$\sum_{i=1}^5\sum_j (e_{i,j}-1) \le10,$$
$$ \sum_{i=1}^5\sum_k(f_{i,k}-1) \le 10,$$
$$\sum_{i=1}^5 E_i \le 10,$$
$$\sum_{i=1}^5 F_i \le 10.$$
The only solutions are the one given above and the equivalent solution 
$$(\pi_L,\pi_M) = ((1^4 2,1^4 2, 1^22^2,2^3,2^3),(1^4 2,1^6,2^3,2^3,2^3)).$$
\end{proof}

\begin{thm}
\label{local}
If $L/K$ and $M/K$ are non-trivial finite extensions isomorphic to $\C((t))$ as $\C$-algebras satisfying $\sum_i E_i = 2m-2$
and $\sum_i F_i = 2\ell-2$ with $m\ge \ell$ and $L/K$ ramified at $\ge 5$ points, then, after suitably reindexing, $(\pi_L,\pi_M)$ must be one of the following:
\begin{enumerate}
\item $((1^{4n-2}2,1^{4n-2}2,2^{2n},2^{2n},n^4,1^{4n}),(1^{4n-2}2,1^{4n},2^{2n},2^{2n},n^4,1^{4n-2}2))$ for some $n\ge 2$
\item $((1^{22}2,1^{22}2,2^{12},3^{8},3^{8},1^{24}),(1^{22}2,1^{24},2^{12},3^{8},3^{8},1^{22}2))$
\item $((1^{46}2,1^{46}2,2^{24},3^{16},4^{12},1^{48}),(1^{46}2,1^{48},2^{24},3^{16},4^{12},1^{46}2))$
\item $((1^{118}2,1^{118}2,2^{60},3^{40},5^{24},1^{120}),(1^{118}2,1^{120},2^{60},3^{40},5^{24},1^{118}2))$
\end{enumerate}
\end{thm}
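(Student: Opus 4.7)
The plan is to merge Lemmas \ref{basic}, \ref{m>6}, and \ref{m=6} with the sharp equalities $\sum_i E_i=2m-2$ and $\sum_i F_i=2\ell-2$, and then to recognize the surviving configurations as Kleinian (Platonic) triples.

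I would first dispose of the small and sporadic cases. The case $m=6$ is handled by Lemma \ref{m=6}: the explicit pair $(\pi_L,\pi_M)$ produced there is tested directly against the two equalities and against the hypothesis on the number of ramified places. For $m\ge 7$, Lemma \ref{m>6} splits off nine exceptional tuples (items (1)--(9)), each fixing $(\ell,m)$ and the leading data $(R_1,R_2,S_1,S_2,a_1,a_2,b_1,b_2)$ with all sizes $\le 10$; a short computer enumeration of the remaining partitions at the other $x_i$ checks the equalities and matches any survivor against the listed families.

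The bulk of the argument is the \emph{main case} $R_1=R_2=1$, $1\le S_1+S_2\le 2$, $m\le\ell+1$. Here $\pi_L$ is $1^{\ell-2}2$ at $x_1$ and $x_2$, and $L$ is ramified at at least three further indices $x_3,\dots,x_{k+2}$. I would compare the total budget $\sum_i(E_i+F_i)=2\ell+2m-4$ with the contributions forced at each $x_i$: at $x_1,x_2$ these depend on $\pi_M$ and on $S_1+S_2$; at any index where only $M$ is ramified, on $S_i$; at $x_i$ with $i\ge 3$, Lemma \ref{inhomo} gives $E_i+F_i\ge 3$ unless both $\pi_L$ and $\pi_M$ are homogeneous of a common index $r_i$ at $x_i$. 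Tracking these contributions using $m\le\ell+1$ and $\ell,m\ge 7$ shows there is no slack to accommodate inhomogeneity at $i\ge 3$, so $\pi_L$ and $\pi_M$ are both homogeneous of a common index $r_i$ at each such $x_i$. Each $r_i$ then divides $\gcd(\ell,m)$; since $m\le\ell+1$ and each $r_i\ge 2$, we get $m=\ell$. Riemann--Hurwitz for $L/K$ now reads
\[
\sum_{i=3}^{k+2}\Bigl(1-\frac{1}{r_i}\Bigr)=2-\frac{4}{\ell}.
\]
Each summand is at least $1/2$, so $k\le 3$, while $L$ ramified at $\ge 5$ points gives $k\ge 3$; hence $k=3$ and $(r_3,r_4,r_5)$ solves $\tfrac{1}{r_3}+\tfrac{1}{r_4}+\tfrac{1}{r_5}=1+\tfrac{4}{\ell}>1$ in integers $\ge 2$. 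The classical answer is one of $(2,2,n)$, $(2,3,3)$, $(2,3,4)$, $(2,3,5)$, giving $\ell=4n,24,48,120$.

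It remains to pin down $\pi_M$. By the $L\leftrightarrow M$ symmetry of the setup, the same analysis applied with $M$ in the role of $L$ shows that $\pi_M$ is itself $1^{\ell-2}2$ at two light indices and homogeneous of the same $r_i$ at $x_3,x_4,x_5$; the constraint $S_1+S_2\le 2$ together with the equality $\sum_i F_i=2\ell-2$ forces $M$'s second light index to be a new index $x_6$ at which $\pi_L$ is unramified, reproducing the listed patterns. The main obstacle is the homogenization step: leveraging the sharp equalities and Lemma \ref{inhomo} to force $\pi_L$ and $\pi_M$ simultaneously homogeneous of the same index at every $x_i$ with $i\ge 3$. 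Once that reduction is in hand, the Platonic enumeration concludes the proof.
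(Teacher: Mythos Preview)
Your plan matches the paper's proof in its overall architecture: invoke Lemmas~\ref{basic}, \ref{m>6}, \ref{m=6}; in the main case force homogeneity at $x_3,x_4,x_5$ via Lemma~\ref{inhomo}; deduce $\ell=m$ and the Platonic inequality $1/r_3+1/r_4+1/r_5>1$. That part is essentially what the paper does.

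The gap is in your treatment of the nine sporadic tuples from Lemma~\ref{m>6}. You propose to dispose of them by ``a short computer enumeration of the remaining partitions'' checking the two Riemann--Hurwitz equalities. But in cases (4) and (5), with $(\ell,m)=(8,10)$, applying Lemma~\ref{inhomo} exactly as you do in the main case forces homogeneity of common index $2$ at $x_3,x_4,x_5$, yielding
\[
\pi_L=(1^62,\,1^62,\,2^4,\,2^4,\,2^4),\qquad \pi_M=(1^62^2,\,1^82,\,2^5,\,2^5,\,2^5)
\]
(up to swapping the first two entries). A direct computation gives $\sum_iE_i=18=2m-2$ and $\sum_iF_i=14=2\ell-2$ exactly, with both $L$ and $M$ ramified at five points. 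So the local equalities do \emph{not} eliminate these cases, and they do not match any of the listed families. The paper rules them out by a global argument: a computer check that no transitive subgroup of $\mathsf{S}_{10}$ is generated by five elements of cycle types $1^82$, $1^62^2$, $2^5$, $2^5$, $2^5$ with product $1$---in other words, no genus-zero field $M/K$ with this ramification data exists at all. Your proposal needs to incorporate this step (or an equivalent global obstruction); a purely local enumeration cannot finish the proof.
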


\begin{proof}
As $\ell \le m\le 2\ell-6$, we have $\ell\ge 6$.   If $\ell=6$, then $m=6$, and by Lemma~\ref{m=6}, $M$ is ramified over $K$
only at $x_2$, $x_3$, $x_4$, and $x_5$.  We therefore assume $\ell\ge 7$.

Suppose first that $R_1=R_2=1$ and $S_1+S_2=2$.  
Then $E_1+E_2=2m-4$, $F_1+F_2=2\ell-4$, and $\ell\le m\le \ell+1$.  
There cannot be a sixth point $x_6$ over which $M/K$ ramifies, since that would imply $F_6\ge \ell$,
so $F_1+F_2+F_6 > 2\ell-2$.
By Lemma~\ref{inhomo}, therefore, for $3\le i\le 5$,
$L/K$ and $M/K$ must both be homogeneous of equal index $r_i\ge 2$ at $x_i$.
We assume without loss of generality that $r_5\ge r_4\ge r_3$.
Since $r_3$, $r_4$, and $r_5$ divide $\ell$ and $m$ and $|\ell-m|\le 1$, we have $\ell=m$.
Equation \eqref{RHL} implies
$$2\ell-2 = 1+1+\ell-\frac\ell{r_3} + \ell-\frac\ell{r_4} + \ell-\frac\ell{r_5},$$
so
$$\frac 1{r_3} + \frac 1{r_4} + \frac 1{r_5} > 1.$$
This is exactly the inequality that arises in classifying simply laced Dynkin diagrams with a trivalent vertex \cite[\S 11.4 (10)]{Hump},
and the only possibilities for $(r_3,r_4,r_5)$ are $(2,2,n)$ for some $n\ge 2$, $(2,3,3)$, $(2,3,4)$, and $(2,3,5)$, corresponding to $D_{n+2}$, $E_6$, $E_7$, and $E_8$ respectively.
These give, respectively, the solutions (2), (3), (4), and (5).

This leaves the cases (1)--(9) of Lemma~\ref{m>6}. In every case,
we have $a_1 S_1 + a_2 S_2 \ge 2m-3$
and $b_1 R_1 + b_2 R_2 \ge 2\ell-3$.  By Lemma~\ref{inhomo}, for $3\le i\le 5$, $L$ and $M$ must be homogeneous over $x_i$ of equal index.  In cases (1) and (2), this index must be $7$, contrary to \eqref{RHL}.
In case (3), $R_3,R_4,R_5\ge 4$, which is again inconsistent with \eqref{RHL}.
In cases (4) and (5) of Lemma~\ref{m>6} a case analysis by computer shows that a transitive subgroup of $\mathsf{S}_{10}$ cannot be generated by $5$ elements, of types $1^8 x$, $1^6 2^2$, $2^5$, $2^5$, and $2^5$ which multiply to $1$.
In case (6), 
$$(\pi_L,\pi_M)=((1^6 2, 1^6 2, 2^4, 2^4, 2^4), (1^{10}, 1^4 2^3, 2^5, 2^5, 2^5)),$$
which means that $M/K$ is ramified only over $x_2,x_3,x_4,x_5$.
In cases (7) and (8), $R_3,R_4,R_5\ge 6$, is inconsistent with \eqref{RHL},
In case (9), 
$$(\pi_L,\pi_M)=((1^8 2, 1^6 2^2, 2^5, 2^5, 2^5), (1^{10},1^6 2^2, 2^5, 2^5, 2^5)),$$
and, again, $M/K$ is ramified only over $x_2,x_3,x_4,x_5$.
\end{proof}

\section{Permutation groups}

\begin{prop}
\label{dn}
Let $X$ be a set with $4n$ elements, and let $g_1,g_2,g_3,g_4$ denote permutations of $X$ such that
$g_1$ and $g_2$ each have $2n$ orbits of size $2$, $g_3$ has $4$ orbits of size $n$, and $g_4$ and
$g_1 g_2 g_3 g_4$ are transpositions. If $G=\langle g_1,g_2,g_3,g_4\rangle$ acts transitively on $X$, then
$X$ admits a partition $X= Y\sqcup Z$ such that 
\begin{equation}
\label{imprimitive}
g_1(Y) = g_2(Y) = g_3(Z) = g_4(Z) = Z
\end{equation}
\end{prop}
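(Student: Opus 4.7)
The plan is to construct the partition $\{Y, Z\}$ by analyzing the element $\tau := g_1 g_2 g_3$.  Since by hypothesis $g_5 := g_1 g_2 g_3 g_4$ is a transposition and hence an involution, multiplying on the right by $g_4$ gives $\tau = g_5 g_4$, so $\tau$ is a product of two transpositions.  Therefore $\tau$ has one of three cycle types: the identity, a single $3$-cycle, or a disjoint double transposition.

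I would first treat the principal case $\tau = 1$, equivalently $g_3 = g_2 g_1$.  A product $g_2 g_1$ of two fixed-point-free involutions has cycle type $k_1^2 k_2^2 \cdots k_r^2$ where $2k_1, \ldots, 2k_r$ are the sizes of the $\langle g_1, g_2 \rangle$-orbits on $X$.  For this to match the cycle type $n^4$ of $g_3$, the group $\langle g_1, g_2 \rangle$ must have exactly two orbits on $X$, each of size $2n$; call them $A$ and $B$.  Within each, $g_3 = g_2 g_1$ decomposes as two $n$-cycles, yielding canonical bipartitions $A = A^+ \sqcup A^-$ and $B = B^+ \sqcup B^-$, which together are the four $g_3$-orbits.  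The involutions $g_1$ and $g_2$ swap the two halves of each of $A, B$ by the standard alternating structure of a product of two involutions on each orbit.  Transitivity of $G$ then forces $g_4 = (a,b)$ to merge $A$ and $B$, so $a \in A^{\epsilon_A}$ and $b \in B^{\epsilon_B}$ for some signs; setting $Y := A^{\epsilon_A} \cup B^{\epsilon_B}$ and $Z := A^{-\epsilon_A} \cup B^{-\epsilon_B}$ then verifies \eqref{imprimitive}.

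For the two remaining cycle types of $\tau$, I would use the identity $g_2 g_1 = g_3 \tau^{-1}$: since $\tau^{-1}$ is a product of at most two transpositions, it perturbs the cycle type of $g_3$ only on a small support, which restricts the possible $\langle g_1, g_2\rangle$-orbit structures to a finite list.  In each configuration I would construct $Y$ and $Z$ by a refinement of the argument for $\tau = 1$, again using transitivity to locate the supports of $g_4$ and $g_5$ within a common block.

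The hard part of this plan is the residual case analysis when $\tau \neq 1$: here the $\langle g_1, g_2 \rangle$-orbit decomposition is no longer dictated by $g_3$, and one must simultaneously track the $\langle g_1, g_2 \rangle$-orbits, the $g_3$-orbits, and the supports of $g_4$ and $g_5$, using transitivity of $G$ to rule out configurations incompatible with a $2$-block partition satisfying \eqref{imprimitive}.
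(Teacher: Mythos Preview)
Your plan is correct and follows essentially the paper's route: both arguments rewrite $g_2 g_1$ as $g_3$ perturbed by the two transpositions $g_4$ and $g_5$, invoke the dihedral structure of $\langle g_1,g_2\rangle$ to force the cycles of $g_1 g_2$ to come in equal-length pairs, and finish by a finite case analysis. The paper organizes that analysis by the cycle type of $g_1 g_2$ itself---the pairing constraint leaves only the types $(2n)^2$, $n^4$, and $a^2(n-a)^2 n^2$---rather than by the cycle type of $\tau=g_5 g_4$, so your ``principal case'' $\tau=1$ is the degenerate subcase $g_4=g_5$ of the paper's $n^4$ case, and the bulk of the actual work sits in what you have deferred to the residual analysis.
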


\begin{proof}
For each $x\in X$, we consider the orbit $x^H$ of $x$ under the dihedral group $H=\langle g_1,g_2\rangle$. Since $g_1$ and $g_2$ act on $x^H$ without fixed points, $|x^H|$ must be even, and $x^H$ must decompose 
into two $\langle g_1g_2\rangle$
orbits of equal length. In other words, the $\langle g_1 g_2\rangle$-orbits of $X$ 
can be paired in such a way that $g_1$ and $g_2$ both send each such orbit to its partner.

Writing $g_5 = g_1 g_2 g_3 g_4$, we have $g_1 g_2 =  g_5 g_4 g_3^{-1}$.
If $g = (x_1\, x_2)$ is a transposition and $h$ any permutation, the orbits of $\langle h\rangle$ not containing $x_1$ or $x_2$ are $\langle gh\rangle$-orbits.
If $x_1$ and $x_2$ belong to different $\langle h\rangle$-orbits, the union of these orbits form a single $\langle gh\rangle$-orbit, while otherwise, the $\langle h\rangle$-orbit containing $x_1$ and $x_2$ splits into two $\langle gh\rangle$-orbits, of lengths $a$ and $b$ if $a$ and $b$ are the smallest positive integers such that $h^a(x_1) = x_2$ and $h^b(x_2) = x_1$.

Since $g_3^{-1}$ has type $n^4$, the type of $g_4 g_3^{-1}$ is either $n^2(2n)$ or $a(n-a) n^3$ for some $a\in [1,n-1]$.
In the former case, the type of $g_5 g_4 g_3^{-1}$ could be $(2n)^2$, $(3n)n$, $b(n-b)n(2n)$, or $n^2 c (2n-c)$,
where $b\in [1,n-1]$, $c\in [1,2n-1]$. In the latter case, it could be of type $(n-a)n^2(n+a)$, $an^2(2n-a)$, $a(n-a)n(2n)$, $n^4$,
$b(a-b)(n-a)n^3$, $ac(n-a-c)n^3$, or $ad(n-d)(n-a)n^2$, where $b\in [1,a-1]$, $c\in [1,n-a-1]$, and $d\in [1,n-1]$.
The types in which orbit lengths pair up are $(2n)^2$, $n^4$, and $a^2 (n-a)^2 n^2$.  We consider each of these possibilities in turn.
\vskip 10pt\noindent 
Case $(2n)^2$.  Let $X_1$, $X_2$, $X_3$, and $X_4$ denote the four orbits of $g_3$, numbered so that $g_4 = (x_1\, x_2)$
and $g_5 = (x_3\, x_4)$, with $x_i\in X_i$. Let $Y=X_1\sqcup X_2$ and $Z = X_3\sqcup X_4$.
$g_3$, $g_4$, and $g_5$ preserve $Y$ and $Z$, the same is true for $g_1 g_2$, so $g_1$ and $g_2$ each map $Y$ to $Z$ and $Z$ to $Y$.
\vskip 10pt\noindent 
Case $n^4$.  First we consider the subcase that $g_4 g_3^{-1}$ is of type $n^2(2n)$. We write $g_4 = (x_1\,x_2)$
and number the orbits $X_i$ of $g_3$ so that $x_1\in X_1$ and $x_2\in X_2$.  Then $g_5$ must transpose two elements of $X_1\cup X_2$.  The orbit of $\langle g_1,g_2\rangle$ containing $X_1$ must contain one other $X_i$. If $i=2$, then $X_1\sqcup X_2$ is stable under the action of $G$, contrary to the assumption of transitivity. Otherwise, $Y=X_1\sqcup X_2$
and $Z = X_3\sqcup X_4$ satisfy \eqref{imprimitive}.

Second, we consider the subcase that $g_4 g_3^{-1}$ is of type $a(n-a)n^3$. In that case, we may assume $g_4 = (x_1\,y_1)$,
with $x_1,y_1\in X_1$. Choosing $i>1$ such that $Y=X_1\sqcup X_i$ is $\langle g_1,g_2\rangle$-stable, we see that $Y$ is $G$-stable, contrary to transitivity.

\vskip 10pt\noindent 
Case $a^2 (n-a)^2 n^2$. Renumbering, we may assume $g_4 = (x_1\,y_1)$, $g_5=(x_2\,y_2)$ with $x_1,y_1\in X_1$
and $x_2,y_2\in X_2$. Then $Y=X_3\sqcup X_4$ forms a single $\langle g_1,g_2\rangle$-orbit, so $Y$ is $G$-stable,
contrary to transitivity.
\end{proof}

\begin{prop}
\label{e6}
Let $g_1$, $g_2$, $g_3$, and $g_4$ be elements of $\mathsf{S}_{24}$ of type $2^{12}$, $3^8$, $3^8$, and $1^{22}2$
respectively such that $g_1 g_2 g_3 g_4$ is of type $1^{22}2$. If $\langle g_1,g_2,g_3,g_4\rangle$ is transitive, then
there exists a partition of $\{1,2,\ldots,24\}$ into $2$-element sets such that all $g_i$ respect the partition and $g_1 g_2 g_3$ and $g_4$ send each $2$-element set to itself.
\end{prop}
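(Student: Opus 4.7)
The plan is to mimic Proposition~\ref{dn}. Writing $g_5 = g_1 g_2 g_3 g_4$ for the product transposition, the identity
$$g_1 g_2 g_3 = g_5 g_4^{-1} = g_5 g_4$$
expresses $g_1 g_2 g_3$ as a product of two transpositions, hence an element whose support has size at most four. Its possible cycle types are $1^{24}$ (when $g_4 = g_5$), $1^{21} 3$ (when they share a single point), and $1^{20} 2^2$ (when they are disjoint); I would do case analysis on this, in parallel with the analysis of $g_1 g_2 = g_5 g_4 g_3^{-1}$ in Proposition~\ref{dn}.

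The cleanest case is $g_1 g_2 g_3 = 1$. Then $\langle g_1, g_2, g_3\rangle = \langle g_1, g_2\rangle$ satisfies $g_1^2 = g_2^3 = (g_1 g_2)^3 = 1$ and so is a quotient of the triangle group $\Delta(2,3,3) \cong \mathsf{A}_4$. Since $g_1$ and $g_2$ are nontrivial of orders $2$ and $3$, and $\mathsf{A}_4$ has no proper subgroup of order divisible by six, one has $\langle g_1, g_2\rangle = \mathsf{A}_4$. The fixed-point-freeness of $g_1, g_2, g_3$ rules out every $\mathsf{A}_4$-orbit of size other than $12$: an orbit of size $1$, $3$, or $6$ has a point stabilizer containing $g_1$ or a conjugate thereof, forcing a fixed point, and an orbit of size $4$ has each Sylow $3$-subgroup as a point stabilizer, so $g_2$ fixes exactly one of its four points. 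Hence every $\mathsf{A}_4$-orbit is regular of size $12$ and there are exactly two such orbits $O_1, O_2$ on the $24$-set. Because $g_4$ is a single transposition, transitivity of $G = \langle \mathsf{A}_4, g_4\rangle$ forces $g_4 = (a\ b)$ with $a \in O_1$ and $b \in O_2$. The twelve sets $\{h\cdot a, h\cdot b\}$ for $h \in \mathsf{A}_4$ are then pairwise disjoint (by regularity of the two actions), cover the $24$ points, and form a $G$-invariant partition: $\mathsf{A}_4$ permutes them by translation, and $g_4$ sends each pair to itself, exchanging $a$ and $b$ in the base pair while fixing both points of every other pair (since $g_4$ moves only $a$ and $b$). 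As $g_1 g_2 g_3 = 1$, the conclusion follows in this case.

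When $g_1 g_2 g_3$ is a $3$-cycle or a double transposition, $\langle g_1, g_2, g_3\rangle$ is larger, but the small support of $g_1 g_2 g_3$ still imposes enough rigidity. I would treat these along the same lines, either producing the pairing as a perturbation of the $\mathsf{A}_4$-based construction or contradicting transitivity. The main obstacle is the $3$-cycle case: $G$ then contains both a $3$-cycle and the transposition $g_4$, so if $G$ were primitive Jordan's theorem would force $G = \mathsf{S}_{24}$, precluding any block system. The crux is therefore to rule out this primitive alternative, either by a character-theoretic enumeration of $5$-tuples of the specified types in $\mathsf{S}_{24}$, or by showing that among the imprimitive possibilities only blocks of size~$2$ are compatible with the given cycle types---analogous to the elimination of non-generic subcases in Proposition~\ref{dn}.
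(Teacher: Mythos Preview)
The paper proves this by brute-force computer search: fix $g_2=(1,2,3)\cdots(22,23,24)$, enumerate one representative $g_1$ per $C_{\mathsf{S}_{24}}(g_2)$-orbit on the class $C_{2^{12}}$, retain those for which compatible $g_3\in C_{3^8}$ and transpositions $g_4,\,g_5$ exist (six survive), and then verify in GAP that each yields an imprimitive group with blocks of size~$2$. Your strategy---splitting on the cycle type of $g_1g_2g_3=g_5g_4$---is a genuinely different, structural approach, and your handling of the case $g_5g_4=1$ via the von~Dyck group $D(2,3,3)\cong\mathsf{A}_4$ and its regular orbits is correct and pleasant.

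The gap is that the case you have actually completed turns out to be vacuous. The paper's search finds that in \emph{every} solution $g_1g_2$ has cycle type $3^{4}6^{2}$; a short direct check shows that multiplying an element of type $3^8$ on the left by the identity or by a $3$-cycle never produces type $3^{4}6^{2}$, so $g_5g_4$ is in fact always a disjoint double transposition. Thus the entire content of the proposition lives in the $1^{20}2^2$ case, for which you offer only a sketch. Once $g_1g_2g_3$ has order~$2$, the subgroup $\langle g_1,g_2,g_3\rangle$ is a quotient of the hyperbolic $(2,3,3,2)$ polygon group rather than of a finite triangle group, and the finiteness mechanism driving your $\mathsf{A}_4$ argument disappears; ``perturbation of the $\mathsf{A}_4$-construction'' is not a substitute for an argument here. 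Note too that the Jordan obstruction you flag is not special to the $3$-cycle case: $g_4$ is a transposition in every case, so primitivity would force $G=\mathsf{S}_{24}$ regardless of the type of $g_5g_4$. The authors fall back on machine verification for this proposition and for the parallel $E_7$ and $E_8$ statements, which suggests that a clean conceptual argument covering the $2^2$ case may not be easy to find.
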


\begin{proof}
As $g_1 g_2$ can be expressed as a product of three terms, two of which are transpositions and one of  which is of type $3^8$, it follows that $g_1 g_2$ must be of type $3^46^2 $,  $3^59$, , $123^5 6$,
$13^6 5$, $23^6 4$, $123^7$, or $3^8$.  We make a computer search for pairs of $(g_1,g_2)$ of type $(2^{12},3^8)$ such that $g_1 g_2$ is of one of these types.
In doing this, we may assume without loss of generality that 
$$g_2 = (1,2,3)(4,5,6)(7,8,9)\cdots(22,23,24).$$
The action of the centralizer $C_{\mathsf{S}_{24}}(g_2)$ on elements of type $2^{12}$ partitions the conjugacy class $C_{2^{12}}$ of such elements into orbits, and we choose 
at least one representative for $g_1$ from each such orbit.
To do this, we consider 
$$g_1 = (a_1,b_1)(a_2,b_2)\cdots(a_{12},b_{12}).$$
Let $X_k:=\{a_1,a_2,\ldots,a_k,b_1,b_2,\ldots,b_k\}$, and let $\xi_k$ denote the maximum of $\bigl\lfloor \frac {x+2}3\bigr\rfloor$ for $x\in X_k$.
Consider the conditions that $a_{k+1}$ is the smallest integer in $[1,24]$ which does not belong to $X_k$, and if $b_{k+1} > 3\xi_1$, then $b_{k+1} = 3\xi_k+1$. 
It is easy to see that every $C_{\mathsf{S}_{24}}(g_2)$-orbit of $C_{2^{12}}$ is represented by such an element.
There are six solutions for which transpositions $g_1$ and $g_1 g_2 g_3 g_4$ can be chosen with $g_3 \in C_{3^8}$:
\begin{gather*}
(1,4)(2,7)(3,10)(5,12)(6,8)(9,13)(11,16)(14,19)(15,22)(17,24)(18,20)(21,23),\\
(1,4)(2,7)(3,10)(5,12)(6,13)(8,16)(9,11)(14,19)(15,22)(17,24)(18,20)(21,23),\\
(1,4)(2,7)(3,10)(5,12)(6,13)(8,16)(9,19)(11,14)(15,22)(17,24)(18,20)(21,23),\\
(1,4)(2,7)(3,10)(5,13)(6,8)(9,11)(12,16)(14,19)(15,22)(17,24)(18,20)(21,23),\\
(1,4)(2,7)(3,10)(5,13)(6,8)(9,15)(11,16)(12,19)(14,22)(17,24)(18,20)(21,23),\\
(1,4)(2,7)(3,10)(5,13)(6,16)(8,19)(9,11)(12,21)(14,22)(15,17)(18,24)(20,23).\\
\end{gather*}

 In every case, $g_1 g_2$ is of type $3^86^2$. Using GAP, we check that in every case, the group is imprimitive, preserving a decomposition of 
$\{1,2,\ldots,24\}$
into a set of $12$ pairs, on which it acts transitively.
\end{proof}

\begin{prop}
\label{e7}
Let $g_1$, $g_2$, $g_3$, and $g_4$ be elements of $\mathsf{S}_{48}$ of type $2^{24}$, $3^{16}$, $4^{12}$, and 
$1^{46}2$
respectively such that $g_1 g_2 g_3 g_4$ is of type $1^{46}2$. If $\langle g_1,g_2,g_3,g_4\rangle$ is transitive, then
there exists a partition of $\{1,2,\ldots,48\}$ into $2$-element sets such that all $g_i$ respect the partition and $g_1 g_2 g_3$ and $g_4$ send each $2$-element set to itself.
\end{prop}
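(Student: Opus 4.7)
The plan is to follow the strategy of Proposition~\ref{e6}. Write $g_5 := g_1 g_2 g_3 g_4$, which is a transposition by hypothesis, and rearrange to
\[
g_1 g_2 = g_5 g_4^{-1} g_3^{-1},
\]
so that $g_1 g_2$ is obtained from $g_3^{-1} \in C_{4^{12}}$ by successive multiplication by the two transpositions $g_4^{-1}$ and $g_5$. Multiplying an element of type $4^{12}$ by a transposition either joins two $4$-cycles into an $8$-cycle (giving type $4^{10}\,8$) or splits a single $4$-cycle as $1+3$ or $2+2$ (giving $1\cdot 3\cdot 4^{11}$ or $2^2\cdot 4^{11}$). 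A second transposition applied to each of these produces a short explicit list of admissible cycle types for $g_1 g_2$, each containing at least nine $4$-cycles.

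Next, as in Proposition~\ref{e6}, fix
\[
g_2 = (1,2,3)(4,5,6)\cdots(46,47,48)
\]
and enumerate one representative of $g_1$ from each $C_{\mathsf{S}_{48}}(g_2)$-orbit on the conjugacy class $C_{2^{24}}$. Writing $g_1 = (a_1,b_1)\cdots(a_{24},b_{24})$, I would reuse the canonical-form rule of Proposition~\ref{e6}: at each step $a_{k+1}$ is the smallest integer in $[1,48]$ not yet used, and $b_{k+1}$ is either $\le 3\xi_k$ or equal to $3\xi_k+1$, where $\xi_k$ is defined as before. This produces exactly one representative per orbit, and we discard all representatives for which $g_1 g_2$ does not have one of the admissible cycle types.

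For each surviving candidate $(g_1,g_2)$, rather than searching directly over the very large class $C_{4^{12}}$, I would enumerate pairs of transpositions $(g_4,g_5)$ and set $g_3 := (g_1 g_2)^{-1} g_5 g_4^{-1}$, keeping only those quadruples for which $g_3$ lies in $C_{4^{12}}$. For each such quadruple, invoke GAP to test transitivity of $\langle g_1,g_2,g_3,g_4\rangle$ and, when transitive, to extract a block system. The expected structure is a partition into $24$ pairs on which $g_4$ swaps one pair and fixes the rest pointwise, while $g_1 g_2 g_3 = g_5 g_4^{-1}$, a product of two disjoint transpositions, swaps within two blocks and fixes the rest.

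The main obstacle is the sheer size of the search: the number of $C_{\mathsf{S}_{48}}(g_2)$-orbits on $C_{2^{24}}$ is much larger than in the $E_6$ case, and the transposition search over $(g_4,g_5)$ for each surviving pair is itself large. Keeping the computation tractable will require filtering by the cycle type of $g_1 g_2$ at the earliest possible stage, exploiting the explicit formula for $g_3$ so that one never iterates over $C_{4^{12}}$ directly, and reducing the $(g_4,g_5)$ search modulo the centralizer of $(g_1,g_2)$. Once the transitive solutions are enumerated, the verification of the block structure is a routine GAP computation identical in form to the one in Proposition~\ref{e6}.
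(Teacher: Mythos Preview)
Your plan is essentially the paper's own argument: fix $g_2=(1,2,3)\cdots(46,47,48)$, enumerate canonical $C_{\mathsf{S}_{48}}(g_2)$-orbit representatives for $g_1\in C_{2^{24}}$, filter by the cycle type of $g_1 g_2$, then run over transposition pairs $(g_4,g_5)$, recover $g_3$, and check transitivity and block structure in GAP. The paper finds fifteen transitive solutions this way, and in each of them $g_1 g_2$ has type $4^8 8^2$.

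That last fact exposes a concrete slip in your cycle-type analysis. You assert that every admissible type for $g_1 g_2$ contains at least nine $4$-cycles, but from $g_4^{-1}g_3^{-1}$ of type $4^{10}8$ a second transposition joining two of the ten remaining $4$-cycles yields $4^8 8^2$, with only eight. If you implement ``$\ge 9$ four-cycles'' as an early filter, you discard precisely the type in which \emph{all} actual solutions live, and the search returns nothing. Two smaller points: the canonical form guarantees at least one representative per orbit, not exactly one (harmless redundancy, but worth stating correctly), and $g_5 g_4^{-1}$ is not a priori a product of two \emph{disjoint} transpositions, so do not hard-wire that assumption into the enumeration or the block-structure check.
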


\begin{proof}
We use the same method as before, with $g_2=(1,2,3)\cdots (46,47,48)$.  As $g_1 g_2$ must be the product of an element of $C_{4^{12}}$ and two transpositions, it must be of type
$4^88^2$, $4^9(12)$, $134^98$, $2^2 4^9 8$, $14^{10} 7$, $24^{10} 6$, $3 4^{10} 5$, $1 4^{11} 3$, $2^2 4^{11}$, or $4^{12}$.
There are fifteen solutions $g_1$, from
\begin{align*}
( 1, 4)( 2, 7)( 3,10)( 5,13)( 6,16)( 8,18)( 9,19)(11,21)\\
(12,14)(15,22)(17,24)(20,25)(23,28)(26,31)(27,34)(29,37)\\
(30,40)(32,42)(33,43)(35,45)(36,38)(39,46)(41,48)(44,47)
\end{align*}
to
\begin{align*}
( 1, 4)( 2, 7)( 3,10)( 5,13)( 6,16)( 8,19)( 9,22)(11,25)\\
(12,14)(15,28)(17,30)(18,31)(20,34)(21,37)(23,39)(24,40)\\
(26,32)(27,29)(33,43)(35,45)(36,46)(38,48)(41,47)(42,44).
\end{align*}

In every case, $g_1 g_2$ is of type $4^88^2$, and in every case, the group is imprimitive, preserving a decomposition of 
$\{1,2,\ldots,48\}$
into a set of $24$ pairs on which it acts transitively.

 \end{proof}

\begin{prop}
\label{e8}
Let $g_1$, $g_2$, $g_3$, and $g_4$ be elements of $\mathsf{S}_{120}$ of type $2^{60}$, $3^{40}$, $5^{24}$, and 
$1^{118}2$
respectively such that $g_1 g_2 g_3 g_4$ is of type $1^{118}2$. If $\langle g_1,g_2,g_3,g_4\rangle$ is transitive, then
there exists a partition of $\{1,2,\ldots,120\}$ into $2$-element sets such that all $g_i$ respect the partition and $g_1 g_2 g_3$ and $g_4$ send each $2$-element set to itself.

\end{prop}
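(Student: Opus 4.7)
The plan is to extend the computational strategy of Propositions \ref{e6} and \ref{e7} to degree $120$, using aggressive pruning to keep the enumeration tractable. First, write
\[
g_1 g_2 = (g_1 g_2 g_3 g_4)\, g_4^{-1} g_3^{-1} = g_5 g_4 g_3^{-1},
\]
where $g_5 := g_1 g_2 g_3 g_4$ and $g_4$ are transpositions and $g_3^{-1}$ is of type $5^{24}$. Every cycle type of $g_1 g_2$ therefore arises from $5^{24}$ by two successive transposition-multiplications, each of which either merges two cycles of lengths $a$ and $b$ into one cycle of length $a+b$ or splits a cycle of length $a+b$ into two cycles of lengths $a$ and $b$. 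Enumerate all cycle types obtainable in this way; this yields a short admissible list (e.g.\ $5^{24}$, $5^{22}\,10^2$, various shapes with parts of size $1,2,3,4,6,7,8,9,10,15$).

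Next, fix the canonical representative $g_2 = (1,2,3)(4,5,6)\cdots(118,119,120)$ and enumerate $C_{\mathsf{S}_{120}}(g_2)$-orbit representatives of $C_{2^{60}}$ using the canonical form of Proposition \ref{e6}: write $g_1 = (a_1,b_1)\cdots(a_{60},b_{60})$, set $X_k = \{a_1, b_1, \ldots, a_k, b_k\}$ and $\xi_k = \max_{x \in X_k} \lfloor (x+2)/3 \rfloor$, require $a_{k+1}$ to be the smallest integer not in $X_k$, and require $b_{k+1} \le 3\xi_k$ or $b_{k+1} = 3\xi_k + 1$. For each resulting $g_1$ whose product with $g_2$ has cycle type on the admissible list, use GAP to search for a transposition $g_4$ and an element $g_3$ of type $5^{24}$ such that $g_1 g_2 g_3 g_4$ is a transposition and $G := \langle g_1, g_2, g_3, g_4 \rangle$ acts transitively on $\{1, \ldots, 120\}$. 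In each surviving case, verify that $G$ preserves a partition of $\{1, \ldots, 120\}$ into sixty pairs, and that $g_1 g_2 g_3$ and $g_4$ fix every pair setwise.

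The main obstacle is sheer scale: the centralizer has order $3^{40} \cdot 40!$ and $|C_{2^{60}}|$ is astronomical, so a naive sweep is infeasible. The essential optimization is to prune inside the recursive construction of $g_1$: as soon as every element of a particular $3$-cycle of $g_2$ lies in $X_k$, the corresponding cycles of the partial product are permanently locked, and any partial assignment whose locked-in cycle data is already incompatible with every type on the admissible list must be discarded immediately. Additional savings come from restricting the subsequent search for $g_3$ and $g_4$ using the now-known cycle structure of $g_1 g_2$, and from exploiting the symmetry that exchanges $g_4$ with $g_5$. Once the enumeration is pushed through, the expectation---based on the $E_6$ and $E_7$ patterns---is that every admissible $5$-tuple preserves an imprimitive block system of sixty pairs with the stated stabilization properties.
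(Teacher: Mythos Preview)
Your approach is the paper's: fix $g_2=(1,2,3)\cdots(118,119,120)$, list the admissible cycle types of $g_1g_2$ (the paper gives them explicitly: $5^{20}10^2$, $5^{21}15$, $1\cdot4\cdot5^{21}\cdot10$, $2\cdot3\cdot5^{21}\cdot10$, $1\cdot5^{22}\cdot9$, $2\cdot5^{22}\cdot8$, $3\cdot5^{22}\cdot7$, $4\cdot5^{22}\cdot6$, $1\cdot4\cdot5^{23}$, $2\cdot3\cdot5^{23}$, $5^{24}$), recursively build canonical representatives of $g_1$ with pruning, and check the survivors.  The paper finds sixty-six transitive solutions, all with $g_1g_2$ of type $5^{20}10^2$, and verifies the block system in each.

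One point should be sharpened.  Your pruning trigger---``as soon as every element of a particular $3$-cycle of $g_2$ lies in $X_k$, the corresponding cycles of the partial product are permanently locked''---is not the right criterion: having a $g_2$-orbit inside $X_k$ does not close off any cycle of $g_1g_2$, and discarding partial strings only on the basis of \emph{completed} cycles is likely too weak to cut the $\gtrsim 7\times10^{31}$ representatives down to a feasible number.  The paper's formulation is more effective: regard the partially defined map $g_1^\circ g_2$ as a graph on $\{1,\ldots,120\}$; each connected component is either a complete cycle (which will be a cycle of every completion $g_1g_2$) or a path (which must sit inside a cycle of length at least the path length).  Both constraints are used simultaneously.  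In the paper's worked example, the partial string already forces a $4$-cycle and a path on $12$ vertices, and since no admissible type contains both a $4$-cycle and a cycle of length $\ge 12$, the branch is killed.  Incorporating path-length lower bounds, not just locked cycles, is what makes the enumeration tractable.
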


\begin{proof}
We proceed as before, with $g_2=(1,2,3)\cdots (118,119,120)$.  As $g_1 g_2$ must be the product of an element of $C_{5^{24}}$ and two transpositions, a full list of the possibilities for its type goes as follows:
$5^{20}(10)^2 $, $5^{21}(15)$, $145^{21}(10)$,  $23 5^{21}(10)$,  $15^{22}9$,  $25^{22}8$, $35^{22}7$, $45^{22}6$, $145^{23}$, $235^{23}$, $5^{24}$.
The number of representatives of orbits of $C_{2^{60}}$ is now quite large, at least $\frac{120!}{2^{60}60! 3^{40} 40!} \approx 7\cdot 10^{31}$, so the calculation is feasible only if we can avoid enumerating all the possibilities explicitly.

To accomplish this, we use a recursive procedure which determines, for each initial string $g_1^\circ = (a_1,b_1)\cdots(a_k,b_k)$, all possible choices $g_1$ which give rise to a (not necessarily transitive) solution.
The key step is to rule out  enough initial strings $g_1^\circ$ for which no solution is possible  that the computation is manageable. 
Since $g_2$ is fixed, the initial string  $g_1^\circ$ determines the partially defined function $g_1^\circ g_2$  
from $\{1,2,\ldots,120\}$ to itself. 

We consider the graph with vertex set $\{1,2,\ldots,120\}$ for which edges are given by $g_1 g_2$ whenever it is defined.
Each connected component of this graph  is either a complete cycle (which will be a cycle of $g_1 g_2$, for all $g_1$ with initial string $g_1^\circ$) or a sequence (which will be a consecutive 
part of a cycle of $g_1 g_2$, for all $g_1$ with initial string $g_1^\circ$.)  By comparing the lengths of these components to the target type of $g_1 g_2$, we can quickly eliminate those $g_1^\circ$ for which a solution is already impossible, thus reducing the number of cases to be checked to a few million.
For instance, if 
$$g_1^\circ = (1,4)(2,5)(3,7)(6,10)(8,12)(9,13)(11,16)(14,18),$$
then we obtain the following graph (with isolated vertices omitted):
\vskip 10pt

\begin{tikzpicture}[
    vertex/.style={circle, draw, minimum size=1.2em, inner sep=1pt},
    edge/.style={-}
]

\node[vertex] (13) at (0,1){13};
\node[vertex] (7) at (1,1){7};
\node[vertex] (1) at (2,1){1};
\node[vertex] (5) at (3,1){5};
\node[vertex] (3) at (4,1){3};
\node[vertex] (8) at (5,1){8};
\node[vertex] (10) at (6,1){10};
\node[vertex] (4) at (7,1){4};
\node[vertex] (2) at (8,1){2};
\node[vertex] (6) at (9,1){6};
\node[vertex] (11) at (10,1){11};
\node[vertex] (17) at (11,1){17};
\node[vertex] (9) at (3,2){9};
\node[vertex] (14) at (4,2){14};
\node[vertex] (16) at (4,3){16};
\node[vertex] (12) at (3,3){16};
\node[vertex] (15) at (7,2.5){15};
\node[vertex] (18) at (8,2.5){18};

%
\draw[edge] (13) -- (7);
\draw[edge] (7) -- (1);
\draw[edge] (1) -- (5);
\draw[edge] (5) -- (3);
\draw[edge] (3) -- (8);
\draw[edge] (8) -- (10);
\draw[edge] (10) -- (4);
\draw[edge] (4) -- (2);
\draw[edge] (2) -- (6);
\draw[edge] (6) -- (11);
\draw[edge] (11) -- (17);
\draw[edge] (9) -- (14);
\draw[edge] (14) -- (16);
\draw[edge] (16) -- (12);
\draw[edge] (12) -- (9);
\draw[edge] (15) -- (18);

\end{tikzpicture}

Since $g_1 g_2$ cannot have both a $4$-cycle and a cycle of length $\ge 12$, there are no solutions with initial string $g_1^\circ$.

We find that there are sixty-six transitive solutions in all, of which
\begin{gather*}
(  1,  4)(  2,  7)(  3, 10)(  5, 13)(  6, 16)(  8, 19)(  9, 22)( 11, 25)( 12, 28)( 14, 30)( 15, 31)( 17, 34)\\
( 18, 20)( 21, 37)( 23, 40)( 24, 26)( 27, 43)( 29, 46)( 32, 49)( 33, 35)( 36, 52)( 38, 54)\\
( 39, 41)( 42, 55)( 44, 57)( 45, 47)( 48, 50)( 51, 58)( 53, 60)( 56, 61)( 59, 64)( 62, 67)\\
 63, 70)( 65, 73)( 66, 76)( 68, 79)( 69, 82)( 71, 85)( 72, 88)( 74, 90)( 75, 91)( 77, 94)\\
( 78, 80)( 81, 97)( 83,100)( 84, 86)( 87,103)( 89,106)( 92,109)( 93, 95)( 96,112)( 98,114)\\
( 99,101)(102,115)(104,117)(105,107)(108,110)(111,118)(113,120)(116,119)
\end{gather*}
is the first.
In every case, $g_1 g_2$ is of type $5^{20}(10)^2$, and in every case, the group is imprimitive, preserving a decomposition of 
$\{1,2,\ldots,120\}$
into a set of $60$ pairs, on which it acts transitively.
\end{proof}

We can now prove the main theorem.

\begin{proof}
By Theorem~\ref{local}, we must be in one of the cases $D_n$, $E_6$, $E_7$, or $E_8$.
In the $D_n$ cases, by Proposition~\ref{dn}, $L$ and $M$ contain subfields $L_0$ and $M_0$ respectively which are degree $2$ extensions of $K$ ramified only at $x_1$ and $x_2$.
Up to isomorphism as $K$-algebras, there is a unique such extension, so $L_0\cong M_0$ as $K$-algebras, and $L_0\otimes_K M_0$ is the product of two fields and therefore contains zero-divisors.
Since $L\otimes_K M\supset L_0\otimes_K M_0$ is a field, this is impossible.

In the cases $E_6$, $E_7$, and $E_8$, by Propositions \ref{e6}, \ref{e7}, and \ref{e8} respectively, we see that the Galois group $G_L$ of the splitting field $L^s$ of $L/K$
acts transitively but imprimitively on the set $X$ of $24$, $48$, or $120$ $K$-embeddings of $L$ in $L^s$.  Let $H_L = \Gal(L^s/L)$, so $H_L$ is the stabilizer of one point in this action.
We have seen that $G_L$ fixes a partition of $X$ into a set $Y$ of pairs, and $G_L$ acts transitively on $Y$.  Let $L_0$ denote the fixed field of $L^s$ with respect to the subgroup of $G_L$ which acts trivially on $Y$.
Transpositions must act trivially on any non-trivial partition of $X$, so $g_4$ and $g_1g_2g_3$ fix $L_0$, and $g_1,g_2,g_3$ map to elements $\bar g_i$ of $\Gal(L_0/L)$ which multiply to $1$ and generate a transitive subgroup of the permutation group of $Y$.  Let $d_i$ denote the order of $\bar g_i$, which must 
be either the order of $g_i$ or half the order of $g_i$.
Thus, $d_1\in \{1,2\}$, $d_2=3$, and $d_3=3$ in the $E_6$ case, $d_3\in \{2,4\}$ in the $E_7$ case, and $d_3=5$ in the $E_8$ case.
We must have $d_1=2$, since otherwise the group $\langle \bar g_1,\bar g_2,\bar g_3\rangle = \langle \bar g_2\rangle$ has order $3$, which is impossible since it acts transitively on a set of at least $12$ elements.

The group $\langle \bar g_1,\bar g_2,\bar g_3\rangle$ is a quotient of the von Dyck group
$$D(d_1,d_2,d_3) := \langle z_1,z_2,z_3| z_1^{d_1},z_2^{d_2},z_3^{d_3},z_1z_2z_3\rangle,$$
which, from the classification of spherical reflection groups, is of order $12$, $24$, $6$, and $60$ respectively for $(d_1,d_2,d_3)$ equal to $(2,3,3)$, $(2,3,4)$, $(2,3,2)$, and $(2,3,5)$ respectively.
Thus, we have $d_2 = 4$ in the $E_7$ case and $\langle \bar g_1,\bar g_2,\bar g_3\rangle = D(d_1,d_2,d_3)$ in all cases, and this group acts simply transitively on $Y$. It follows that $H_L$ acts trivially on $Y$,
so $L_0\subset L$.  As there is a unique transitive action of a group of order $n$ on an $n$-element set, field $L_0$ is uniquely determined by $x_1,x_2,x_3$.

Repeating the same construction for $M$, we obtain a field $M_0$, also ramified over $x_1$, $x_2$, and $x_3$ with local monodromy $\bar g_1$, $\bar g_2$, and $\bar g_3$.
Therefore, $L_0$ and $M_0$ are isomorphic as $K$-extensions, so $L_0\otimes_K M_0$ is not a field, which contradicts the hypothesis that $L\otimes_K M\supset L_0\otimes_K M_0$ is a field.

\end{proof}

\section{Latt\`es maps}

Let $E$ be an elliptic curve over $\C$ and $n$ a positive integer. The multiplication map $[n]\colon E\to E$ commutes with the $\Z/2\Z$ action on $E$ generated by $z\mapsto -z$, so we obtain the \emph{Latt\`es map},
$\langle n\rangle\colon \P^1\to \P^1$, where $\P^1 = E/\pm 1$.  We denote by $\pi$ the quotient map $E\to \P^1$.  Since the diagram 
$$\begin{CD}
E @>[n]>> E \\
@V\pi VV @VV\pi V \\
\mathbb{P}^1 @>\langle n \rangle>> \mathbb{P}^1
\end{CD}$$
commutes, we have $\deg \langle n\rangle = \deg [n] = n^2$.  If the two sets are not disjoint, then $2P\in \ker [n]$, so $nP\in \ker [2]$, means that $\langle n\rangle(x)$ must be one of the $4$ ramification points of 
$\pi$.

A point $x$ in $\P^1$ corresponds to a pair $\{\pm P\}\subset E$ and $\langle n\rangle(x)$ corresponds to $\{\pm n P\}$.  If $y\in \P^1$ corresponds to $\{\pm Q\}\subset E$, then
$\langle n\rangle (x) = \langle n\rangle (y)$ if and only if $P-Q\in \ker [n]$ or $P+Q\in \ker[n]$.  If $P+\ker[n]$ and $-P+\ker[n]$ are disjoint, then there are $2n^2$ choices of $Q$ and therefore $n^2$ pairs $\pm Q$
which map by $\langle n\rangle$ to $\langle n\rangle (x)$.  Therefore, $\langle n\rangle$ is unramified over $\langle n\rangle (x)$. Conversely, if $\langle n\rangle (x) \in [2]$, then $2P\in \ker[n]$, so
$P+\ker[n]=-P+\ker[n]$.  If $n$ is odd, for each element $P\in \ker[2]$, $[n]^{-1}(P)$ contains one element of $\ker[2]$, and if $n$ is even, $[n]^{-1}(0)$ contains four elements of $\ker[2]$ and $[n]^{-1}(P)$ contains no elements of $\ker[2]$ for $P\in \ker[2]\setminus\{0\}$.
We conclude that for $n>2$ and $P\in \ker[2]$, the number of orbits of $\{\pm 1\}$ on $[n]^{-1}(P)$ is strictly less than $n^2$, so $\langle n\rangle$ is ramified over $\{\pm P\}\in \P^1$.

\begin{prop}
If $\ell$ and $m$ are relatively prime integers $>2$, $K\cong L\cong M$ is the function field of $E$, and the inclusions $K\hookrightarrow L$ and $K\hookrightarrow M$ correspond to multiplication by $\ell$ and $m$ respectively, then $L/K$ and $M/K$ are each ramified over the same four points, and $L\otimes_K M$ is isomorphic to the function field of $E$, regarded as an extension of $K$ via multiplication by $\ell m$.
\end{prop}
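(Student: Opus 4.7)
The plan is to handle the two assertions---the common ramification locus and the tensor-product identification---separately. For the ramification claim, I would simply invoke the analysis immediately preceding the statement: it was shown there that for every integer $n > 2$, the Latt\`es map $\langle n\rangle$ is ramified over precisely the four images in $\P^1$ of the $2$-torsion subgroup $E[2]$. Applied with $n = \ell$ and $n = m$, this gives $\langle \ell\rangle$ and $\langle m\rangle$ the same four branch points, which are the ramification loci of $L/K$ and $M/K$.

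For the tensor-product assertion, my strategy is to embed both $L$ and $M$ as intermediate subfields of a common overfield $N$ of degree $(\ell m)^2$ over $K$, and then conclude by a dimension count that uses the coprimality hypothesis. The key algebraic fact is the multiplicativity of the Latt\`es maps: from $\pi\circ[\ell m] = \pi\circ[\ell]\circ[m]$ together with surjectivity of $\pi$, one obtains $\langle \ell m\rangle = \langle \ell\rangle\circ\langle m\rangle = \langle m\rangle\circ\langle \ell\rangle$. Let $N$ denote the function field of the target $\P^1$ of $\langle \ell m\rangle$, viewed as an extension of $K$ of degree $(\ell m)^2$. The two factorizations of $\langle \ell m\rangle$ then realize $L$ and $M$ respectively as intermediate fields of $N/K$, so $K\subseteq L\subseteq N$ and $K\subseteq M\subseteq N$.

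Finally, I would use coprimality to show that the compositum $L\cdot M$ inside $N$ equals $N$. Since $\ell^2=[L:K]$ and $m^2=[M:K]$ both divide $[L\cdot M:K]$, and $\gcd(\ell,m)=1$ implies $\gcd(\ell^2,m^2)=1$, their product $\ell^2 m^2=[N:K]$ divides $[L\cdot M:K]\le[N:K]$, forcing $L\cdot M=N$. The natural multiplication map $L\otimes_K M\to L\cdot M=N$ is then a surjection of $K$-algebras between $K$-vector spaces of equal dimension $\ell^2 m^2$, hence a $K$-algebra isomorphism, identifying $L\otimes_K M$ with $N$. No substantive obstacle arises; the whole argument follows from the multiplicativity of Latt\`es maps and the coprimality hypothesis, with only mild care needed to keep the two factorizations $\langle \ell\rangle\circ\langle m\rangle$ and $\langle m\rangle\circ\langle \ell\rangle$ straight so that $L$ and $M$ genuinely lie inside a single $N$.
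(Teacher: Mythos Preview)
Your proof is correct but takes a genuinely different route from the paper's. For the tensor-product assertion, the paper argues geometrically: it shows the fiber product $\P^1\times_{\P^1}\P^1$ (formed via $\langle\ell\rangle$ and $\langle m\rangle$) is irreducible by constructing an explicit surjection from $E$ onto it. Concretely, given $(x,y)$ in the fiber product, the paper lifts to $P\in\pi^{-1}(x)$, $Q\in\pi^{-1}(y)$ with $mP=\ell Q$, then writes $E=\C/\Lambda$ and uses the Chinese Remainder Theorem on $\Lambda$ to produce $R\in E$ with $\ell R=P$ and $mR=Q$. Your approach instead uses the multiplicativity $\langle\ell m\rangle=\langle\ell\rangle\circ\langle m\rangle=\langle m\rangle\circ\langle\ell\rangle$ to realize $L$ and $M$ as intermediate fields of a single extension $N/K$ of degree $(\ell m)^2$, and then a pure degree argument via coprimality forces $L\otimes_K M\cong N$. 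Your argument is cleaner and entirely algebraic, bypassing the explicit lattice computation; the paper's argument is more hands-on and makes visible the role of $E$ as a common cover dominating the fiber product, which ties the result more directly to the Latt\`es construction. Both approaches use the coprimality of $\ell$ and $m$ at the decisive step---yours through $\gcd(\ell^2,m^2)=1$, the paper's through CRT on $\Lambda$.
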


\begin{proof}
We have already seen that $L/K$ and $M/K$ are ramified over the same four points as $\pi$. The diagram 
$$\begin{CD}
\P^1 @>\langle \ell\rangle>> \P^1 \\
@V\langle \ell\rangle VV @VV\langle m\rangle V \\
\P^1 @>\langle m \rangle>> \P^1
\end{CD}$$
commutes, so if $L\otimes_K M$ is a field at all, it is unirational and therefore rational by L\"uroth's theorem.

To prove that the fiber product of two copies of $\P^1$ mapping to $\P^1$ by $\langle \ell\rangle$ and $\langle m\rangle$ is irreducible, it suffices to construct a surjective map from $E$ to this fiber product. The diagram
$$\begin{CD}
E @>\pi\circ [ \ell]>> \P^1 \\
@V\pi\circ [ \ell] VV @VV\langle m\rangle V \\
\P^1 @>\langle m \rangle>> \P^1
\end{CD}$$
commutes.  If $x,y\in \P^1$ satisfy $\langle m\rangle(x) = \langle \ell\rangle(y)$, choose $P\in \pi^{-1}(x)$, $Q\in \pi^{-1}(y)$.
As $\pi(mP) = \pi(\ell Q)$, replacing $Q$ by $-Q$ if necessary, we may assume $m P = \ell Q$.  Identifying $E$ with $\C/\Lambda$, we choose $\alpha,\beta\in \C$  such that  $\alpha+\Lambda$ and $\beta+\Lambda$
correspond to $P$ and $Q$ respectively.  As $mP=\ell Q$, we have $m \alpha - \ell\beta \in \Lambda$.
By the Chinese Remainder Theorem, there exist $\gamma,\delta\in \Lambda$ such that $m\alpha - \ell \beta = m\gamma-\ell\delta$. Letting $R\in E$ correspond to $\frac{\alpha-\gamma}\ell = \frac{\beta-\delta}m$,
we see that $\ell R = P$ and $m R = Q$.
\end{proof}

\end{document}